\author{Ersin K\i zgut}
\address{Ersin K\i zgut, Department of Mathematics \\ Middle East Technical University \\ 06800 Ankara Turkey}
\email{kizgut@metu.edu.tr}
\author{Murat Yurdakul}
\address{Murat Yurdakul, Department of Mathematics \\ Middle East Technical University \\ 06800 Ankara Turkey}
\email{myur@metu.edu.tr}
\title[Factorized unbounded operators between Fr{\'e}chet spaces]{On the existence of a factorized unbounded operator between Fr{\'e}chet spaces}
\subjclass[2010]{46A03, 46A11, 46A45, 47A68}
\keywords{locally convex spaces, Fr{\'e}chet spaces, unbounded operators, bounded factorization property}
\thanks{This research was partially supported by Turkish Scientific and Technical Research Council.}
\numberwithin{equation}{section}
\theoremstyle{thmit} 
\newtheorem{theorem}{Theorem}[section]
\begin{document}

\maketitle

\begin{abstract}
	For locally convex spaces $X$ and $Y$, the continuous linear map $T:X \to Y$ is called bounded if there is a zero neighborhood $U$ of $X$ such that $T(U)$ is bounded in $Y$. Our main result is that the existence of an unbounded operator $T$ between Fr{\'e}chet spaces $E$ and $F$ which factors through a third Fr{\'e}chet space $G$ ends up with the fact that the triple $(E, G, F)$ has an infinite dimensional closed common nuclear K{\"o}the subspace, provided that $F$ has the property $(y)$.
\end{abstract}
	
\dedicatory{Dedicated to the memory of Prof. Dr. Tosun Terzio{\~g}lu}
\section{Introduction}
	Let $X$ and $Y$ be locally convex spaces. A continuous linear map $T:X \to Y$ is called bounded if there is a $\theta$-neighborhood $U$ of $X$ such that $T(U)$ is bounded in $Y$. We say that a triple $(X,Z,Y)$ has the bounded factorization property and write $(X,Z,Y) \in \mathcal{BF}$ if each linear continuous operator $T : X \to Y$ that factors over $Y$ (that is, $T = R_1 R_2,$ where $R_2 : X \to Z$ and $R_1 : Z \to Y$ are linear continuous operators) is bounded.	Nurlu and Terzio{\~g}lu \cite{Nur84} proved that under some conditions, existence of continuous linear unbounded operators between nuclear K{\"o}the spaces causes existence of common basic subspaces. Djakov and Ramanujan \cite{Dja02-1} sharpened this work by removing nuclearity assumption and using a weaker splitting condition. In \cite{Ter04}, it is shown that the existence of an unbounded factorized operator for a triple of K{\"o}the spaces, under some assumptions, implies the existence of a common basic subspace for at least two of the spaces. Concerning the class of general Fr{\'e}chet spaces, the existence of an unbounded operator inbetween is studied in \cite{Ter86}. It is proved that there is an infinite dimensional closed common nuclear subspace when the range space has a basis, and admits a continuous norm. When the range space has the property $(y)$, that implies the existence of a common nuclear K{\"o}the quotient as proved in \cite{Ter90}. Combining these two results, when the range space has the property $(y)$, common nuclear K{\"o}the subspace is obtained in \cite[Proposition 1]{Ona91}. The aim of this note is to prove the Fr{\'e}chet space analogue of \cite[Proposition 6]{Ter04}, that is, under the condition that $F$ has property $(y)$, and $(E,G,F) \notin \mathcal{BF}$ then there is a common nuclear subspace for all three spaces. We rule out the condition where $G$ can be written as $G=\omega \times X$, where $X$ is a Banach space to avoid the case $T$ becomes almost bounded \cite{Yur93}.
	
	The locally convex space $E$ with neighborhood base $\mathscr{U}(E)$ is said to have property $(y)$ if there is a neighborhood $U_1 \in \mathscr{U}(E)$ such that
	\[
		E' = \bigcup_{U \in \mathscr{U}(E)} \overline{E'[U_1^\circ] \cap U^\circ}.
	\]

\section{Main Result}

\begin{theorem}\label{theorem}
	Let $E,F,G$ be Fr{\'e}chet spaces where $F$ has property $(y)$. Assume there is a continuous, linear, unbounded operator $T:E \to F$ which factors through $G$ such as $T=RS$. Then, there exists an infinite dimensional nuclear K{\"o}the subspace $M$ of $E$ such that the restriction $\displaystyle T|_M$ and the restriction $\displaystyle R|_{S(M)}$ are isomorphisms.
\end{theorem}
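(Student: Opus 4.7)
The strategy is to reduce the problem to producing a closed infinite-dimensional nuclear K\"othe subspace $M\subset E$ on which $T$ acts as a topological isomorphism onto its image in $F$; once such an $M$ is in hand, the corresponding statements for $G$ will follow automatically from the factorization $T=RS$.

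To build $M$, I would exploit the unboundedness of $T$: for every zero-neighborhood $U$ in $E$ the set $T(U)$ fails to be bounded in $F$, so one can choose inductively a sequence $(x_n)\subset E$, bounded in some fixed seminorm of $E$, whose images $Tx_n$ grow without bound in every seminorm of $F$. Using property $(y)$ of $F$ to supply sufficiently many dual functionals, I would then select biorthogonal functionals $(x_n^{*})\subset E'$ and refined fundamental systems of seminorms on $E$ and $F$ so that the matrix $a_{k,n}:=\|x_n\|_k^{E}$ satisfies the Grothendieck--Pietsch nuclearity criterion and matches, up to multiplicative constants along a subsequence, the matrix $\|Tx_n\|_k^{F}$. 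This is exactly the Fr\'echet-space analogue of the construction in \cite[Proposition~6]{Ter04}, and it corresponds to applying \cite[Proposition~1]{Ona91} (which synthesizes \cite{Ter86} and \cite{Ter90}) to the unbounded operator $T:E\to F$. The closed linear span $M:=\overline{\operatorname{span}\{x_n\}}$ is then a nuclear K\"othe space and $T|_M$ is a topological isomorphism onto the closed subspace $\overline{\operatorname{span}\{Tx_n\}}$ of $F$.

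Once $M$ has been produced, the statement for $G$ becomes a formal consequence. Since $T|_M=R|_{S(M)}\circ S|_M$ is a topological isomorphism and both $R$ and $S$ are continuous, $S|_M$ is necessarily injective, and the restriction of the continuous operator $(T|_M)^{-1}\circ R$ to $S(M)$ is a two-sided inverse of $S|_M$; hence $S|_M:M\to S(M)$ is a topological isomorphism, $S(M)$ is closed in $G$, and $R|_{S(M)}=T|_M\circ (S|_M)^{-1}$ is a topological isomorphism onto $T(M)$. The technical heart of the argument is therefore the first step: what must be ensured is that $T|_M$ is a genuine topological isomorphism, and not merely that $M$ is abstractly isomorphic to some subspace of $F$. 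Property $(y)$ is essential precisely at this point, as it provides the dual approximation required to match the seminorm estimates in $E$ and $F$ along a common subsequence. The space $G$ plays a passive role in this scheme, because the unboundedness of $T$ forces the unboundedness of $S$ (otherwise $T=RS$ would be bounded by continuity of $R$), and the K\"othe structure in $G$ is inherited from $(x_n)$ via the topological embedding $S|_M$.
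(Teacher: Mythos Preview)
Your proposal is correct and follows the same overall strategy as the paper: invoke \cite[Proposition~1]{Ona91} (the inside of which you sketch in some detail) to obtain the nuclear K\"othe subspace $M\subset E$ with $T|_M$ a topological isomorphism onto its image, and then deduce the statements about $G$ formally from the factorization $T=RS$. The only difference lies in how this second step is carried out. The paper first verifies by a sequential argument that $S(M)$ is closed in $G$, and then appeals to the Open Mapping Theorem to conclude that the continuous bijection $R|_{S(M)}:S(M)\to T(M)$ between Fr\'echet spaces is an isomorphism. You instead observe directly that $(T|_M)^{-1}\circ R|_{S(M)}$ is a continuous two-sided inverse for $S|_M$, which is slightly more elementary: it makes $S|_M$ a topological isomorphism outright, yields the closedness of $S(M)$ via completeness of $M$, and gives $R|_{S(M)}=T|_M\circ(S|_M)^{-1}$ without invoking the Open Mapping Theorem. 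Both routes are short and equivalent in substance.
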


\begin{proof}
	Let $T:E \to F$ be an unbounded operator factoring through $G \neq \omega \times X$, for any Banach space $X$.
	
\[
     \xymatrix{
         E \ar[d]_{S}  \ar[r]^{T} & F \\
         G \ar[ur]_{R}   &   }
         \]
         
	By \cite[Proposition 1]{Ona91}, there exists an infinite dimensional closed nuclear K{\"o}the subspace $M$ of $E$ such that the restriction $T|_M$ is an isomorphism onto $T(M)$. Since $T$ is injective on $M$, $R$ is injective on $S(M)$ and maps $S(M)$ onto $T(M)=R(S(M))$. Using the fact that $T$ is an isomorphism, it is easy to verify that $R|_{S(M)}$ is one-to-one. Now let $y \in \overline{S(M)}$. So find a sequence $(S(m_n))_{n \in \mathbb{N}}$ in $S(M)$ such taht $\lim S(m_n)=y$. $R$ is continuous at y, then $RS(m_n)=T(m_n)=Ry \in \overline{T(M)}=T(M)$, since $T(M)$ is closed. Thus $\lim T(m_n)=T(m)=Ry$ for some $m \in M$. Since $T$ is an isomorphism on $M$, $\lim T^{-1}T(m_n)=T^{-1}T(m)$, that is, $\lim m_n = m$. $S$ is continuous at $m$, and that implies $\lim S(m_n)=S(m)=y \in S(M)$. Therefore $S(M)$ is closed. Hence $R:S(M) \to R(S(M))$ is an isomorphism by the Open Mapping Theorem.
\end{proof}

As proved in \cite[Lemma 2.1]{Vog91} and \cite[Theorem 2.3]{Vog91}, property $(y)$, which is assumed to be enjoyed by $F$ can be replaced by being locally closed, or being isomorphic to a closed subspace of a K{\"o}the space. It is shown that these conditions are equivalent to have the property $(y)$.
	
\bibliography{factorization}
\bibliographystyle{siam}
\end{document}